\newcommand{\qdn}{\hspace*{-1.5mm}}
\newcommand{\qqdn}{\hspace*{-2.5mm}}
\newcommand{\xqdn}{\hspace*{-5.0mm}}
\newcommand{\xxqdn}{\hspace*{-10mm}}
\newcommand{\ffnk}[4]{\left[\qdn\ba{#1}#3\\[2mm]#4\ea{\!\bigg|\:#2}\right]}
\newcommand{\binm}{\binom}
\newcommand{\nnm}{\nonumber}
\newcommand{\be}{\begin{equation}}
\newcommand{\ee}{\end{equation}}
\newcommand{\ba}{\begin{array}}
\newcommand{\ea}{\end{array}}
\newcommand{\bmn}{\begin{eqnarray}}
\newcommand{\emn}{\end{eqnarray}}
\newcommand{\bnm}{\begin{eqnarray*}}
\newcommand{\enm}{\end{eqnarray*}}
\newcommand{\bln}{\begin{subequations}}
\newcommand{\eln}{\end{subequations}}
\newtheorem{thm}{Theorem}
\newtheorem{entry}{Entry}
\newcommand{\bbtm}[4]{\bibitem{kn:#1}{#2,}~{#3,}~{#4.}}
\newcommand{\cito}[1]{\cite{kn:#1}}
\newcommand{\citu}[2]{\cite[#2]{kn:#1}}
\begin{document} 

\title{A symmetric formula for hypergeometric series}
\author{Chuanan Wei}

\footnote{\emph{2010 Mathematics Subject Classification}: Primary
05A19 and Secondary 33C20}

\dedicatory{Department of Medical Informatics\\
 Hainan Medical University, Haikou 571199, China}
\thanks{\emph{Email address}: weichuanan78@163.com}

\keywords{Hypergeometric series; Dougall's $_2H_2$-series identity;
Basic Hypergeometric series; Bailey's $_6\psi_6$ series
identity}

\begin{abstract}
In terms of Dougall's $_2H_2$ series identity and the series
rearrangement method, we establish an interesting symmetric formula
for hypergeometric series. Then it is utilized to derive a known
nonterminating form of Saalsch\"{u}tz's theorem. Similarly, we also
show that Bailey's $_6\psi_6$ series identity implies the
nonterminating form of Jackson's $_8\phi_7$ summation formula.
Considering the reversibility of the proofs, it is routine to show
that Dougall's $_2H_2$ series identity is equivalent to a known
nonterminating form of Saalsch\"{u}tz's theorem and Bailey's
$_6\psi_6$ series identity is equivalent to the nonterminating form
of Jackson's $_8\phi_7$ summation formula.
\end{abstract}

\maketitle\thispagestyle{empty}
\markboth{Chuanan Wei}
         {A symmetric formula for hypergeometric series}
\section{Introduction}
For an integer $n$ and a complex number $x$, define the shifted
factorial to be
 \[(x)_n=\Gamma(x+n)/\Gamma(x),\]
where $\Gamma(x)$ is the well known gamma function
\[\Gamma(x)=\int_{0}^{\infty}t^{x-1}e^{-t}dt\quad\text{with}\quad Re(x)>0.\]
 Following Andrews, Askey and Roy ~\cito{andrews-b}, define the hypergeometric series
  by
\[\qqdn\xqdn_{1+r}F_s\ffnk{cccc}{z}{a_0,&a_1,&\cdots,&a_r}{&b_1,&\cdots,&b_s}
 \:=\:\sum_{k=0}^\infty
\frac{(a_{0})_{k}(a_{1})_{k}\cdots(a_{r})_{k}}
 {k!(b_{1})_{k}\cdots(b_{s})_{k}}z^k.\]
Then Saalsch\"{u}tz's theorem (cf. \citu{andrews-b}{p. 69}) can be
stated as
 \bmn\label{saal}
 \qqdn\xxqdn_3F_2\ffnk{cccc}{1}{a,b,-n}{c,1+a+b-c-n}
=\frac{(c-a)_n(c-b)_n} {(c)_n(c-a-b)_n}.
 \emn
 A known nonterminating form of it (cf. \citu{andrews-b}{p. 92}) reads as
 \bmn\label{saal-non}
&&\xxqdn{_3F_2}\ffnk{ccc}{1}{a,b,c+d-a-b-1}{c,d}=
{_3F_2}\ffnk{ccc}{1}{1,c-a,c-b}{c-a-b+1,c+d-a-b}
 \nnm\\[1mm]&&\xxqdn\:\:\times\:\:\,
 \frac{\Gamma(c)\Gamma(d)}{\Gamma(a)\Gamma(b)\Gamma(c+d-a-b)}\frac{1}{a+b-c}
 +\frac{\Gamma(c)\Gamma(d)\Gamma(c-a-b)\Gamma(d-a-b)}{\Gamma(c-a)\Gamma(c-b)\Gamma(d-a)\Gamma(d-b)},
 \emn
provided $Re(d-a-b)>0$. The known proof of \eqref{saal-non} comes
from a transformation formula involving three $_3F_2$ series given
by the contour integration method. The reader is referred to
\citu{andrews-b}{Section 2.4} for details.

 Following Slater \cito{slater}, define the bilateral
hypergeometric series to be
 \bnm
\:\xqdn_{r}H_s\ffnk{cccc}{z}{a_1,&a_2,&\cdots,&a_r}{b_1,&b_2,&\cdots,&b_s}
 =\sum_{k=-\infty}^\infty
\frac{(a_1)_k(a_2)\cdots(a_r)_k}{(b_1)_k(b_2)_k\cdots(b_s)_k}z^k.
 \enm
Thus Dougall's $_2H_2$ series identity (cf. \citu{andrews-b}{p.
110}) can be written as
 \bmn\label{h22}
 {_2H_2}\ffnk{ccc}{1}{a,b}{c,d}
 =\frac{\Gamma(1-a)\Gamma(1-b)\Gamma(c)\Gamma(d)\Gamma(c+d-a-b-1)}{\Gamma(c-a)\Gamma(c-b)\Gamma(d-a)\Gamma(d-b)},
 \emn
where $Re(c+d-a-b)>1$.

For an integer $n$ and two complex numbers $x$, $q$ with $|q|<1$,
define the $q$-shifted factorial by
 \bnm
(x;q)_{\infty}=\prod_{i=0}^{\infty}(1-xq^i),\quad
(x;q)_n=\frac{(x;q)_{\infty}}{(xq^n;q)_{\infty}}.
 \enm
For simplification, we shall frequently adopt the following
notations:
 \bnm
&&(x_1,x_2,\cdots,x_r;q)_{\infty}=(x_1;q)_{\infty}(x_2;q)_{\infty}\cdots(x_r;q)_{\infty},\\
&&(x_1,x_2,\cdots,x_r;q)_{n}=(x_1;q)_{n}(x_2;q)_{n}\cdots(x_r;q)_{n}.
 \enm
Following Gasper and Rahman \cito{gasper}, define the basic
hypergeometric series and bilateral basic hypergeometric series to
be
 \bnm
&&\xqdn{_{1+r}\phi_s}\ffnk{cccccc}{q;z}{a_0,&a_1,&\cdots,a_r}{&b_1,&\cdots,b_s}
  =\sum_{k=0}^{\infty}\frac{(a_0,a_1,\cdots,a_r;q)_k}{(q,b_1,\cdots,b_s;q)_k}
\Big\{(-1)^kq^{\binm{k}{2}}\Big\}^{s-r}z^k,\\
&&\qdn{_r\psi_s}\ffnk{cccccc}{q;z}{a_1,&a_2,&\cdots,a_r}{b_1,&b_2,&\cdots,b_s}
  =\sum_{k=-\infty}^{\infty}\frac{(a_1,a_2,\cdots,a_r;q)_k}{(b_1,b_2,\cdots,b_s;q)_k}
\Big\{(-1)^kq^{\binm{k}{2}}\Big\}^{s-r}z^k.
 \enm
Then the nonterminating form of Jackson's $_8\phi_7$ summation
formula (cf. \citu{gasper}{p.54}) and Bailey's $_6\psi_6$ series
identity (cf. \citu{gasper}{p.140}) can be expressed as
\bmn\label{jackon-non}
  &&{_8\phi_7}\ffnk{ccccccc}{q;q}{a,q\sqrt{a},-q\sqrt{a},b,c,d,e,f}{\sqrt{a},-\sqrt{a},qa/b,qa/c,qa/d,qa/e,qa/f}
 \nnm\\[1mm]\nnm&&=
\frac{b}{a}\frac{(qa,c,d,e,f,qb/a,qb/c,qb/d,qb/e,qb/f;q)_{\infty}}{(qa/b,qa/c,qa/d,qa/e,qa/f,bc/a,bd/a,be/a,bf/a,qb^2/a;q)_{\infty}}
 \\[1mm]&&\:\times\:
 {_8\phi_7}\ffnk{ccccccc}{q;q}{b^2/a,qb/\sqrt{a},-qb/\sqrt{a},b,bc/a,bd/a,be/a,bf/a}
  {b/\sqrt{a},-b/\sqrt{a},qb/a,qb/c,qb/d,qb/e,qb/f}
  \nnm\\[1mm]&&\:+\:
\frac{(qa,b/a,qa/cd,qa/ce,qa/cf,qa/de,qa/df,qa/ef;q)_{\infty}}{(qa/c,qa/d,qa/e,qa/f,bc/a,bd/a,be/a,bf/a;q)_{\infty}}
 \emn
with $qa^2=bcdef$,
  \bmn\label{bailey}
  &&\xqdn{_6\psi_6}\ffnk{cccccccccc}{q;\frac{qa^2}{bcde}}
 {q\sqrt{a},-q\sqrt{a},b,c,d,e}
 {\sqrt{a},-\sqrt{a},qa/b,qa/c,qa/d,qa/e}
 \nnm\\[1mm]&&\xqdn\:\,=\:
 \ffnk{ccccc}{q}{q,qa,q/a,qa/bc,qa/bd,qa/be,qa/cd,qa/ce,qa/de}
 {q/b,q/c,q/d,q/e,qa/b,qa/c,qa/d,qa/e,qa^2/bcde}_{\infty},
 \emn
provided $|qa^2/bcde|<1$. The original proof of \eqref{jackon-non}
comes from a three term relation of $_8\phi_7$ series offered by the
$q$-integration method. The reader may consult \citu{gasper}{Section
2.11} for details. Recently, the research of $q$-congruence attracts
several mathematicians. Some nice results can be seen in the papers
\cite{kn:guo-a,kn:guo-b}.

In 2006, Chen and Fu \cito{chen} established some semi-finite forms
of bilateral basic hypergeometric series in accordance with Cauchy's
method. Subsequently, Jouhet \cito{jouhet} deduced \eqref{bailey}
from \eqref{jackon-non} in the same way. Several years later, Wei,
Yan and Li \cito{wei-a} derived similarly \eqref{h22} from
\eqref{saal-non}. More results related to Cauchy's method can be
found in the papers \cite{kn:zhang-a,kn:zhang-b}.

Inspired by the works just mentioned, it is natural to consider the
inverse of Cauchy's method. According to the series rearrangement
method, we shall deduce \eqref{saal-non} from \eqref{h22} in Section
2 and show that \eqref{bailey} implies \eqref{jackon-non} in Section
3.

\section{A symmetric formula for hypergeometric series}
\begin{thm}\label{thm}
Let $a,b,c,d$ be complex numbers. Then
 \bnm
\Phi(a,b;c,d)+\Phi(c,d;a,b)=\frac{\Gamma(a)\Gamma(b)\Gamma(c)\Gamma(d)\Gamma(a+b+c+d-1)}
{\Gamma(a+c)\Gamma(a+d)\Gamma(b+c)\Gamma(b+d)},
 \enm
where the symbol on the left hand side stands for
 \bnm
\Phi(a,b;c,d)=\sum_{k=0}^{\infty}\frac{\Gamma(a+k)\Gamma(b+k)\Gamma(a+b+c+d-1+k)}
{\Gamma(1+k)\Gamma(a+b+c+k)\Gamma(a+b+d+k)}.
 \enm
\end{thm}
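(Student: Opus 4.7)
The plan is to recognize $\Phi(a,b;c,d) + \Phi(c,d;a,b)$ as one side of a bilateral ${}_2H_2$-sum evaluable directly by Dougall's identity \eqref{h22}.

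First I would apply \eqref{h22} with the parameter specialization $(a,b,c,d)\mapsto(1-a,1-b,c+1,d+1)$. Using $\Gamma(c+1)=c\Gamma(c)$, $\Gamma(d+1)=d\Gamma(d)$, and the balancing computation $(c+1)+(d+1)-(1-a)-(1-b)-1=s-1$ with $s=a+b+c+d$, the right-hand side of \eqref{h22} becomes
\[
\sum_{k\in\mathbb{Z}}\frac{(1-a)_k(1-b)_k}{(c+1)_k(d+1)_k}=cd\cdot\frac{\Gamma(a)\Gamma(b)\Gamma(c)\Gamma(d)\Gamma(s-1)}{\Gamma(a+c)\Gamma(a+d)\Gamma(b+c)\Gamma(b+d)},
\]
which is exactly $cd$ times the Theorem's target. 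It thus remains to show that this bilateral sum equals $cd\,[\Phi(a,b;c,d)+\Phi(c,d;a,b)]$.

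Second, I would split the sum at $k=0$ and convert the negative-index tail via $k\mapsto -m$ together with the reflection $\Gamma(x-m)=(-1)^{m}\Gamma(x)/(1-x)_{m}$; this yields
\[
\sum_{k\in\mathbb{Z}}\frac{(1-a)_k(1-b)_k}{(c+1)_k(d+1)_k} = {}_3F_2\!\left[\begin{matrix}1,\,1-a,\,1-b \\ c+1,\,d+1\end{matrix};1\right]+\frac{cd}{ab}\,{}_3F_2\!\left[\begin{matrix}1,\,1-c,\,1-d \\ a+1,\,b+1\end{matrix};1\right].
\]
Third, I would apply Thomae's three-term ${}_3F_2$-transformation to each piece with parameter-excess $\sigma = s-1$. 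For the first piece this carries $(1,1-a,1-b;c+1,d+1)$ to the Saalschützian triple $(c,d,s-1;a+c+d,b+c+d)$, producing exactly $cd\,\Phi(c,d;a,b)$; symmetrically, Thomae's transformation turns the second piece into $ab\,\Phi(a,b;c,d)$, so the $cd/ab$ prefactor combines to give $cd\,\Phi(a,b;c,d)$. Summing yields $cd\,[\Phi(a,b;c,d)+\Phi(c,d;a,b)]$; equating with Step~1 and dividing by $cd$ proves the Theorem.

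The only non-mechanical step is the Thomae transformation in Step~3, and in particular the verification that its image of $(1,1-a,1-b;c+1,d+1)$ with $\sigma=s-1$ is precisely the Saalschützian triple $(c,d,s-1;a+c+d,b+c+d)$ underlying $\Phi(c,d;a,b)$, accompanied by the correct prefactor $cd\,\Gamma(c)\Gamma(d)\Gamma(s-1)/[\Gamma(a+c+d)\Gamma(b+c+d)]$. This identification embodies the ``series rearrangement'' advertised in the abstract and is the essence of deriving the Theorem from \eqref{h22}.
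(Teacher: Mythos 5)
Your proposal is correct, and its skeleton coincides with the paper's: both start by specializing Dougall's identity \eqref{h22} at $(1-a,1-b,1+c,1+d)$, splitting the bilateral series at $k=0$, and reflecting the negative tail to get the two unilateral sums with the $cd/ab$ prefactor. The divergence is in how each unilateral piece is converted into a $\Phi$. You invoke Thomae's transformation as a known black box (your parameter bookkeeping is right: with excess $\sigma=s-1$ the triple $(1,1-a,1-b;c+1,d+1)$ maps to $(c,d,s-1;a+c+d,b+c+d)$ with prefactor $cd\,\Gamma(c)\Gamma(d)\Gamma(s-1)/[\Gamma(a+c+d)\Gamma(b+c+d)]$, i.e.\ $cd\,\Phi(c,d;a,b)$, and symmetrically for the other piece). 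The paper instead proves exactly this relation from scratch: it writes $1/[\Gamma(1+c+k)\Gamma(1+d+k)]$ via Gauss's theorem \eqref{f21} run backwards, expands the resulting ${}_2F_1$, interchanges the two summations, and re-evaluates the inner sum by Gauss again --- this double-sum rearrangement is the ``series rearrangement method'' of the title and is, in effect, a self-contained derivation of the very Thomae transformation you cite. So your route is shorter but imports a stronger external ingredient, while the paper's is longer but needs only Gauss's summation. One terminological slip: the relation you use is one of Thomae's \emph{two-term} transformations, not a three-term relation (the three-term relations of ${}_3F_2$-series are a different family, and are what the paper says the classical proof of \eqref{saal-non} relies on); this does not affect the validity of your argument.
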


\begin{proof}
Split the bilateral series into two parts to obtain
 \bnm
&&{_2H_2}\ffnk{ccc}{1}{1-a,1-b}{1+c,1+d}\\[1mm]
&&\:\:=\:\sum_{i=0}^{\infty}\frac{(1-a)_i(1-b)_i}{(1+c)_i(1+d)_i}
+\sum_{i=-\infty}^{-1}\frac{(1-a)_i(1-b)_i}{(1+c)_i(1+d)_i}\\[1mm]
&&\:\:=\:\sum_{i=0}^{\infty}\frac{(1-a)_i(1-b)_i}{(1+c)_i(1+d)_i}
+\frac{cd}{ab}\sum_{i=0}^{\infty}\frac{(1-c)_i(1-d)_i}{(1+a)_i(1+b)_i}\\[1mm]
&&\:\:=\:\frac{\Gamma(1+c)\Gamma(1+d)}{\Gamma(1-a)\Gamma(1-b)}
\sum_{i=0}^{\infty}\frac{\Gamma(1-a+i)\Gamma(1-b+i)}{\Gamma(1+c+i)\Gamma(1+d+i)}\\[1mm]
&&\:\:+\:\:\frac{\Gamma(a)\Gamma(b)}{\Gamma(-c)\Gamma(-d)}
\sum_{i=0}^{\infty}\frac{\Gamma(1-c+i)\Gamma(1-d+i)}{\Gamma(1+a+i)\Gamma(1+b+i)}.
 \enm
By means of the case $d=1$ of \eqref{h22}:
 \bmn\label{f21}
 {_2F_1}\ffnk{ccc}{1}{a,\:b}{c}
 =\frac{\Gamma(c)\Gamma(c-a-b)}{\Gamma(c-a)\Gamma(c-b)},
 \emn
 we can proceed as follows:
 \bnm
&&{_2H_2}\ffnk{ccc}{1}{1-a,1-b}{1+c,1+d}\\[1mm]
&&\:\:=\:\frac{\Gamma(1+c)\Gamma(1+d)}{\Gamma(1-a)\Gamma(1-b)}
\sum_{i=0}^{\infty}\frac{\Gamma(1-a+i)\Gamma(1-b+i)}{\Gamma(1+i)\Gamma(1+c+d+i)}
 {_2F_1}\ffnk{ccc}{1}{c,d}{1+c+d+i}\\[1mm]
&&\:\:+\:\:\frac{\Gamma(a)\Gamma(b)}{\Gamma(-c)\Gamma(-d)}
\sum_{i=0}^{\infty}\frac{\Gamma(1-c+i)\Gamma(1-d+i)}{\Gamma(1+i)\Gamma(1+a+b+i)}
 {_2F_1}\ffnk{ccc}{1}{a,b}{1+a+b+i}
 \enm
 \bnm
&&\:\:=\:\frac{cd}{\Gamma(1-a)\Gamma(1-b)}
\sum_{i=0}^{\infty}\sum_{k=0}^{\infty}
\frac{\Gamma(1-a+i)\Gamma(1-b+i)\Gamma(c+k)\Gamma(d+k)}{\Gamma(1+i)\Gamma(1+c+d+i+k)\Gamma(1+k)}\\[1mm]
&&\:\:+\:\:\frac{1}{\Gamma(-c)\Gamma(-d)}
\sum_{i=0}^{\infty}\sum_{k=0}^{\infty}
\frac{\Gamma(1-c+i)\Gamma(1-d+i)\Gamma(a+k)\Gamma(b+k)}{\Gamma(1+i)\Gamma(1+a+b+i+k)\Gamma(1+k)}\\[1mm]
&&\:\:=\:cd\sum_{k=0}^{\infty}
\frac{\Gamma(c+k)\Gamma(d+k)}{\Gamma(1+k)\Gamma(1+c+d+k)}{_2F_1}\ffnk{ccc}{1}{1-a,1-b}{1+c+d+k}\\[1mm]
&&\:\:+\:\:cd\sum_{k=0}^{\infty}
\frac{\Gamma(a+k)\Gamma(b+k)}{\Gamma(1+k)\Gamma(1+a+b+k)}{_2F_1}\ffnk{ccc}{1}{1-c,1-d}{1+a+b+k}\\[1mm]
&&\:\:=\:cd\sum_{k=0}^{\infty}\frac{\Gamma(c+k)\Gamma(d+k)\Gamma(a+b+c+d-1+k)}
{\Gamma(1+k)\Gamma(a+c+d+k)\Gamma(b+c+d+k)}\\[1mm]
&&\:\:+\:\:cd\sum_{k=0}^{\infty}\frac{\Gamma(a+k)\Gamma(b+k)\Gamma(a+b+c+d-1+k)}
{\Gamma(1+k)\Gamma(a+b+c+k)\Gamma(a+b+d+k)}.
 \enm
Employing the substitutions $a\to1-a,b\to1-b,c\to1+c,d\to1+d$ in
\eqref{h22}, we get
 \bnm
{_2H_2}\ffnk{ccc}{1}{1-a,\:1-b}{1+c,\:1+d}=
\frac{\Gamma(a)\Gamma(b)\Gamma(1+c)\Gamma(1+d)\Gamma(a+b+c+d-1)}
{\Gamma(a+c)\Gamma(a+d)\Gamma(b+c)\Gamma(b+d)}.
 \enm
The combination of the last two equations produces
 \bnm
&&cd\sum_{k=0}^{\infty}\frac{\Gamma(c+k)\Gamma(d+k)\Gamma(a+b+c+d-1+k)}
{\Gamma(1+k)\Gamma(a+c+d+k)\Gamma(b+c+d+k)}\\[1mm]
&&+\:cd\sum_{k=0}^{\infty}\frac{\Gamma(a+k)\Gamma(b+k)\Gamma(a+b+c+d-1+k)}
{\Gamma(1+k)\Gamma(a+b+c+k)\Gamma(a+b+d+k)}\\[1mm]
&&=\frac{\Gamma(a)\Gamma(b)\Gamma(1+c)\Gamma(1+d)\Gamma(a+b+c+d-1)}
{\Gamma(a+c)\Gamma(a+d)\Gamma(b+c)\Gamma(b+d)}.
 \enm
Dividing both sides by $cd$, we achieve Theorem \ref{thm}.
\end{proof}

The symmetric formula is beautiful and it includes some known
results as special cases. On the research of reciprocity formulas,
the reader is referred to the papers
\cite{kn:andrews-a,kn:kang,kn:liu-a,kn:liu-b,kn:ma}.

When $b=-n$, Theorem \ref{thm} reduces to the following summation
formula:
 \bnm
\qquad _3F_2\ffnk{cccc}{1}{a,a+c+d-1-n,-n}{a+c-n,a+d-n}
=\frac{(1-c)_n(1-d)_n} {(1-a-c)_n(1-a-d)_n},
 \enm
which is equivalent to \eqref{saal}. Thus Theorem \ref{thm} can be
regarded as the nonterminating form of \eqref{saal}.

When $c=a$ and $d=b$, Theorem \ref{thm} reduces to the following
summation formula: \bnm
 _3F_2\ffnk{cccc}{1}{a,b,2a+2b-1}{a+2b,2a+b}
=\frac{1}{2}\frac{\Gamma(a)\Gamma(b)\Gamma(a+2b)\Gamma(2a+b)}
{\Gamma(2a)\Gamma(2b)\Gamma(a+b)\Gamma(a+b)}.
 \enm
It can also be attained by letting $a\to2a+2b-1$, $c\to a$ in
 Dixon's $_3F_2$-series identity(cf. \citu{andrews-b}{p. 72}):
 \bnm
 &&\xxqdn\xxqdn_3F_2\ffnk{cccc}{1}{a,b,c}{1+a-b,1+a-c}
  \nnm\\[1mm]
&&\xxqdn\xxqdn\:\:=\:\frac{\Gamma(1+\frac{a}{2})\Gamma(1+a-b)\Gamma(1+a-c)\Gamma(1+\frac{a}{2}-b-c)}
{\Gamma(1+a)\Gamma(1+\frac{a}{2}-b)\Gamma(1+\frac{a}{2}-c)\Gamma(1+a-b-c)},
 \enm
provided $Re(1+\frac{a}{2}-b-c)>0$.

Now we begin to prove \eqref{saal-non} by using Theorem \ref{thm}.
In terms of \eqref{f21}, we have
 \bnm
&&\qdn\xqdn\Phi(c,d;a,b)\\[1mm]
&&\qdn\xqdn=\sum_{k=0}^{\infty}\frac{\Gamma(c+k)\Gamma(d+k)\Gamma(a+b+c+d-1+k)}
{\Gamma(1+k)\Gamma(a+c+d+k)\Gamma(b+c+d+k)}\\[1mm]
&&\qdn\xqdn=\sum_{k=0}^{\infty}\frac{\Gamma(d+k)\Gamma(a+b+c+d-1+k)}
{\Gamma(1+k)\Gamma(a+b+c+2d+k)}{_2F_1}\ffnk{ccc}{1}{a+d,b+d}{a+b+c+2d+k}\\[1mm]
&&\qdn\xqdn=\sum_{k=0}^{\infty}\sum_{j=0}^{\infty}
\frac{\Gamma(d+k)\Gamma(a+b+c+d-1+k)\Gamma(a+d+j)\Gamma(b+d+j)}
{\Gamma(1+k)\Gamma(1+j)\Gamma(a+b+c+2d+k+j)\Gamma(a+d)\Gamma(b+d)}\\[1mm]
&&\qdn\xqdn=\sum_{j=0}^{\infty}\frac{\Gamma(a+d+j)\Gamma(b+d+j)\Gamma(d)\Gamma(a+b+c+d-1)}
{\Gamma(1+j)\Gamma(a+b+c+2d+j)\Gamma(a+d)\Gamma(b+d)}{_2F_1}\ffnk{ccc}{1}{d,a+b+c+d-1}{a+b+c+2d+j}\\[1mm]
&&\qdn\xqdn=\sum_{j=0}^{\infty}\frac{\Gamma(d)\Gamma(a+b+c+d-1)\Gamma(a+d+j)\Gamma(b+d+j)}
{\Gamma(a+d)\Gamma(b+d)\Gamma(1+d+j)\Gamma(a+b+c+d+j)}\\[1mm]
&&\qdn\xqdn=\frac{1}{d(a+b+c+d-1)}{_3F_2}\ffnk{ccc}{1}{1,a+d,b+d}{1+d,a+b+c+d}.
 \enm
Substitute the relation into Theorem \ref{thm} to obtain
 \bnm
&&\frac{\Gamma(a)\Gamma(b)\Gamma(a+b+c+d-1)}
{\Gamma(a+b+c)\Gamma(a+b+d)}{_3F_2}\ffnk{ccc}{1}{a,b,a+b+c+d-1}{a+b+c,a+b+d}
\\[1mm]&&+\:\frac{1}{d(a+b+c+d-1)}{_3F_2}\ffnk{ccc}{1}{1,a+d,b+d}{1+d,a+b+c+d}\\[1mm]
&&=\frac{\Gamma(a)\Gamma(b)\Gamma(c)\Gamma(d)\Gamma(a+b+c+d-1)}
{\Gamma(a+c)\Gamma(a+d)\Gamma(b+c)\Gamma(b+d)}.
 \enm
Performing the replacements $c\to d-a-b$, $d\to c-a-b$ in the last
equation, we get \eqref{saal-non} to complete the proof.

In a word, we have derived the nonterminating form of
Saalsch\"{u}tz's theorem \eqref{saal-non} from Dougall's $_2H_2$
series identity \eqref{h22} via the series rearrangement method. By
reversing the process, it is not difficult to realize that we can
also deduce \eqref{h22} from \eqref{saal-non} through the series
rearrangement method. In this sense, \eqref{saal-non} and
\eqref{h22} are equivalent with each other.

$${}$$\\

\section{Bailey's $_6\psi_6$ series identity implies the nonterminating form\\of Jackson's $_8\phi_7$ summation
formula}

The case $e=a$ of \eqref{bailey} is
 \bmn\label{phi-65}
  {_6\phi_5}\ffnk{cccccccccc}{q;\frac{qa}{bcd}}
 {a,q\sqrt{a},-q\sqrt{a},b,c,d}
 {\sqrt{a},-\sqrt{a},qa/b,qa/c,qa/d}=
 \frac{(qa,qa/bc,qa/bd,qa/cd;q)_{\infty}}{(qa/b,qa/c,qa/d,qa/bcd;q)_{\infty}},
 \emn
where $|qa/bcd|<1$. It is well known that Jackson's $_8\phi_7$
summation formula (cf. \citu{gasper}{p. 43})
  \bmn\label{jackon}
  &&\xxqdn{_8\phi_7}\ffnk{ccccccc}
 {q;q}{a,q\sqrt{a},-q\sqrt{a},b,c,d,q^{1+n}a^2/bcd,q^{-n}}{\sqrt{a},-\sqrt{a},qa/b,qa/c,qa/d,q^{-n}bcd/a,q^{1+n}a}
 \nnm\\[1mm]&&\xxqdn\:\:=\:
\frac{(qa,qa/bc,qa/bd,qa/cd;q)_n}{(qa/b,qa/c,qa/d,qa/bcd;q)_n}
  \emn
can be derived from \eqref{phi-65} in accordance with the series
rearrangement method. Now we start to prove \eqref{jackon-non}
according to \eqref{bailey}, \eqref{phi-65} and \eqref{jackon}.
Split the bilateral series on the left hand side of \eqref{bailey}
into two parts to achieve
  \bmn\label{equation-a}
&&\qqdn\xxqdn
 \frac{(q,qa/cd,qa/ce,qa/cf,qa/de,qa/df,qa/ef,qa/cdef,qcdef/a;q)_{\infty}}
 {(qc,qd,qe,qf,qa/cde,qa/cdf,qa/cef,qa/def,qa^2/cdef;q)_{\infty}}
 \nnm\\[1mm]\nnm
&&\qqdn\xxqdn\:={_6\psi_6}\ffnk{cccccccccc}{q;\frac{qa^2}{cdef}}
 {q\sqrt{\frac{cdef}{a}},-q\sqrt{\frac{cdef}{a}},cde/a,cdf/a,cef/a,def/a}
 {\sqrt{\frac{cdef}{a}},-\sqrt{\frac{cdef}{a}},qf,qe,qd,qc}\\[1mm]
 &&\qqdn\xxqdn\:=\sum_{k=0}^{\infty}\frac{1-q^{2k}cdef/a}{1-cdef/a}
\frac{(cde/a,cdf/a,cef/a,def/a;q)_k}{(qf,qe,qd,qc;q)_k}\bigg(\frac{qa^2}{cdef}\bigg)^k
  \nnm\\[1mm]\nnm
 &&\qqdn\xxqdn\:
  +\sum_{k=-\infty}^{-1}\frac{1-q^{2k}cdef/a}{1-cdef/a}
\frac{(cde/a,cdf/a,cef/a,def/a;q)_k}{(qf,qe,qd,qc;q)_k}\bigg(\frac{qa^2}{cdef}\bigg)^k\\[1mm]
 &&\qqdn\xxqdn\:=\sum_{k=0}^{\infty}
\frac{1-q^{2k}cdef/a}{1-cdef/a}\frac{(cde/a,cdf/a,cef/a,def/a;q)_k}{(qf,qe,qd,qc;q)_k}
 \bigg(\frac{qa^2}{cdef}\bigg)^k
  \nnm\\[1mm]\nnm
&&\qqdn\xxqdn\:+\:\frac{qa^2}{cdef}\frac{(1-q^2a/cdef)(1-1/c)(1-1/d)(1-1/e)(1-1/f)}{(1-a/cdef)(1-qa/def)(1-qa/cef)(1-qa/cdf)(1-qa/cde)}\\[1mm]
&&\qqdn\xxqdn\:\times\:\sum_{k=0}^{\infty}\frac{1-q^{2+2k}a/cdef}{1-q^2a/cdef}
  \frac{(q/c,q/d,q/e,q/f;q)_k}{(q^2a/def,q^2a/cef,q^2a/cdf,q^2a/cde;q)_k}
\bigg(\frac{qa^2}{cdef}\bigg)^k.
  \emn
Denote the two sums on the right hand side by $\Omega(a,c,d,e,f)$
and $\Theta(a,c,d,e,f)$, respectively. Above all, we can calculate
$\Omega(a,c,d,e,f)$ as follows:
 \bnm
&&\xxqdn\Omega(a,c,d,e,f)\\[1mm]
&&\xxqdn\:=\sum_{k=0}^{\infty}\frac{1-q^{2k}cdef/a}{1-cdef/a}\frac{(cdef/a,cef/a,def/a,cd/a;q)_{k}}
{(q,qd,qc,qef;q)_{k}}\bigg(\frac{qa^2}{cdef}\bigg)^k\\[1mm]
&&\xqdn\:\times\:{_8\phi_7}\ffnk{ccccccc}
 {q;q}{ef,q\sqrt{ef},-q\sqrt{ef},e,f,qa/cd,q^{k}cdef/a,q^{-k}}{\sqrt{ef},-\sqrt{ef},qf,qe,cdef/a,q^{1-k}a/cd,q^{1+k}ef}
 \enm
 \bnm
&&\xqdn\:=\sum_{i=0}^{\infty}\sum_{k=i}^{\infty}\frac{1-q^{2i}ef}{1-ef}
 \frac{(ef,e,f,qa/cd,q^{k}cdef/a,q^{-k};q)_i}{(q,qf,qe,cdef/a,q^{1-k}a/cd,q^{1+k}ef;q)_i}q^i\\[1mm]
&&\qdn\:\times\:
\frac{1-q^{2k}cdef/a}{1-cdef/a}\frac{(cdef/a,cef/a,def/a,cd/a;q)_k}{(q,qd,qc,qef;q)_k}
\bigg(\frac{qa^2}{cdef}\bigg)^k\\[1mm]
 &&\xqdn\:=\sum_{i=0}^{\infty}\frac{1-q^{2i}ef}{1-ef}
\frac{(ef,e,f,qa/cd,cef/a,def/a;q)_i(qcdef/a;q)_{2i}}
{(q,qf,qe,cdef/a,qd,qc;q)_i(qef;q)_{2i}}
\bigg(\frac{qa}{ef}\bigg)^i\\[1mm]
  &&\qdn\:\times\:\:
  {_6\phi_5}\ffnk{ccccccc}{q;\frac{qa^2}{cdef}}{q^{2i}cdef/a,q^{1+i}\!\sqrt{cdef/a},-q^{1+i}\!\sqrt{cdef/a},q^icef/a,q^idef/a,cd/a}
 {q^{i}\!\sqrt{cdef/a},-q^{i}\!\sqrt{cdef/a},q^{1+i}d,q^{1+i}c,q^{1+2i}ef}\\[1mm]
&&\xqdn\:=\frac{(qa/c,qa/d,qa/ef,qcdef/a;q)_{\infty}}{(qc,qd,qef,qa^2/cdef;q)_{\infty}}
\sum_{i=0}^{\infty}\frac{1-q^{2i}ef}{1-ef}\frac{(ef,e,f,ef/a;q)_i}{(q,qf,qe,qa;q)_i}
\bigg(\frac{qa}{ef}\bigg)^i\\[1mm]
&&\qdn\:\times\:{_8\phi_7}\ffnk{ccccccc}
{q;q}{a,q\sqrt{a},-q\sqrt{a},qa^2/cdef,c,d,q^{i}ef,q^{-i}}
{\sqrt{a},-\sqrt{a},cdef/a,qa/c,qa/d,q^{1-i}a/ef,q^{1+i}a}\\[1mm]
&&\xqdn\:=
\frac{(qa/c,qa/d,qa/ef,qcdef/a;q)_{\infty}}{(qc,qd,qef,qa^2/cdef;q)_{\infty}}
 \sum_{j=0}^{\infty}\frac{1-q^{2j}a}{1-a}\frac{(a,qa^2/cdef,c,d,e,f;q)_j}{(q,cdef/a,qa/c,qa/d,qf,qe;q)_j}
\\[1mm]
 &&\qdn\:\times\: \frac{(qef;q)_{2j}}{(qa;q)_{2j}}q^j\:
{_6\phi_5}\ffnk{ccccccc}{q;\frac{qa}{ef}}{q^{2j}ef,q^{1+j}\!\sqrt{ef},-q^{1+j}\!\sqrt{ef},q^je,q^jf,ef/a}
 {q^{j}\!\sqrt{ef},-q^{j}\!\sqrt{ef},q^{1+j}f,q^{1+j}e,q^{1+2j}a}\\[1mm]
 &&\xqdn\:=\frac{(q,qa/c,qa/d,qa/e,qa/f,qcdef/a;q)_{\infty}}{(qa,qc,qd,qe,qf,qa^2/cdef;q)_{\infty}}\\[1mm]
&&\xqdn\:\times\:{_8\phi_7}\ffnk{ccccccc}
 {q;q}{a,q\sqrt{a},-q\sqrt{a},qa^2/cdef,c,d,e,f}{\sqrt{a},-\sqrt{a},cdef/a,qa/c,qa/d,qa/e,qa/f}.
\enm
 Similarly, $\Theta(a,c,d,e,f)$ can be manipulated as
 \bnm
&&\xxqdn\Theta(a,c,d,e,f)\\[1mm]
&&\xxqdn=\frac{qa^2}{cdef}\frac{(1-q^2a/cdef)(1-1/c)(1-1/d)(1-1/e)(1-1/f)}{(1-a/cdef)(1-qa/def)(1-qa/cef)(1-qa/cdf)(1-qa/cde)}\\[1mm]
&&\xxqdn\times\:
\frac{(q,q^3a/cdef,q^2a^2/c^2def,q^2a^2/cd^2ef,q^2a^2/cde^2f,q^2a^2/cdef^2;q)_{\infty}}
 {(q^2a/cde,q^2a/cdf,q^2a/cef,q^2a/def,qa^2/cdef,q^3a^3/c^2d^2e^2f^2;q)_{\infty}}\\[1mm]
&&\xxqdn\:\times\: {_8\phi_7}\ffnk{ccccccccccccccccc}{q;q}
{\frac{q^2a^3}{c^2d^2e^2f^2},q\sqrt{\frac{q^2a^3}{c^2d^2e^2f^2}},-q\sqrt{\frac{q^2a^3}{c^2d^2e^2f^2}},
 \frac{qa^2}{cdef},\frac{qa}{cde},\frac{qa}{cdf},\frac{qa}{cef},\frac{qa}{def}}
 {\sqrt{\frac{q^2a^3}{c^2d^2e^2f^2}},-\sqrt{\frac{q^2a^3}{c^2d^2e^2f^2}},\frac{q^2a}{cdef},
 \frac{q^2a^2}{cdef^2},\frac{q^2a^2}{cde^2f},\frac{q^2a^2}{cd^2ef},\frac{q^2a^2}{c^2def}}.
 \enm
Substituting the last two equations into \eqref{equation-a}, we
attain \eqref{jackon-non} after some regular simplifications.

In conclusion, we have deduced the nonterminating form of Jackson's
$_8\phi_7$ summation formula \eqref{jackon-non} from Bailey's
$_6\psi_6$ series identity \eqref{bailey} via the series
rearrangement method. By reversing the process, it is conventional
to understand that we can also derive \eqref{bailey} from
\eqref{jackon-non} through the series rearrangement method. In this
sense, \eqref{jackon-non} and \eqref{bailey} are equivalent with
each other. $${}$$\\

 \textbf{Acknowledgments}

 The work is supported by the National Natural Science Foundation of China (No. 11661032).


\end{document}